\documentclass[11pt,a4paper]{article}

\usepackage[english]{babel}
\usepackage{times}
\usepackage{graphicx}
\usepackage{amscd}
\usepackage{amsmath}
\usepackage{amsfonts}
\usepackage{amssymb}
\usepackage{amsthm}
\usepackage{latexsym}

\newtheorem{theorem}{Theorem}

\newtheorem{lemma}[theorem]{Lemma}
\newtheorem{corollary}[theorem]{Corollary}

\textheight235mm
\textwidth166mm
\topmargin-10mm
\hoffset-14mm
\parindent0mm

\newcommand{\R}{\mathbb{R}}

\renewcommand{\epsilon}{\varepsilon}
\newcommand{\eps}{\varepsilon}

\renewcommand{\le}{\leqslant}

\renewcommand{\ge}{\geqslant}

\begin{document}

\title{Rigidity of critical points \\ for a nonlocal Ohta-Kawasaki energy}

\author{Serena Dipierro\footnote{School of Mathematics and Statistics,
University of Melbourne, 813 Swanston St, Parkville VIC 3010, Australia,
and
Weierstra{\ss}-Institut f\"ur Angewandte
Analysis und Stochastik, Hausvogteiplatz 5/7, 10117 Berlin, Germany, serydipierro@yahoo.it}
\and
Matteo Novaga\footnote{Department of Mathematics, University of Pisa, novaga@dm.unipi.it}
\and
Enrico Valdinoci\footnote{School of Mathematics and Statistics,
University of Melbourne, 813 Swanston St, Parkville VIC 3010, Australia,
Weierstra{\ss}-Institut f\"ur Angewandte
Analysis und Stochastik, Hausvogteiplatz 5/7, 10117 Berlin, Germany, and
Dipartimento di Matematica, Universit\`a degli studi di Milano,
Via Saldini 50, 20133 Milan, Italy, enrico@math.utexas.edu} }

\date{} 

\maketitle

\begin{abstract}
We investigate the shape of critical points for a free energy consisting of a nonlocal perimeter plus a nonlocal repulsive term. In particular,  we prove that a volume-constrained critical point is necessarily a ball if its volume is sufficiently small with
respect to its isodiametric ratio, thus extending a result 
previously known only for global minimizers.

We also show that, at least in one-dimension, 
there exist critical points with arbitrarily small volume and large 
isodiametric ratio. This example shows that a constraint on the diameter is, in general, necessary to establish the radial symmetry of the critical points.
\end{abstract}

\tableofcontents

\section{Introduction}
In recent years the following variational problem 
has been widely studied:
\begin{equation}\label{eqcoul}
\min_{|E|=m} P(E)+
\iint_{E\times E} 
\frac{dx\,dy}{|x-y|},
\end{equation}
where $P(E)$ denotes the perimeter of a set $E\subset \R^3$.
Such problem first appeared in the liquid drop model of the atomic nuclei proposed by Gamow in 1928 
\cite{Gamow} and then developed by other researchers \cite{Bohr,CPS}, and it is
also relevant in some models of diblock copolymer melts \cite{CR,OK}.

In \cite{MK1,MK2} (see also \cite{CS}) the authors showed that global
minimizers of \eqref{eqcoul}
exist if the volume $m$ is small, and do not exist if the volume is large enough. 
They also showed that minimizers are necessarily balls if the volume is small enough.
This result has been later extended in \cite{Goldman} to connected critical points in two-dimensions.
However it is still not known if there exist non-spherical critical points of \eqref{eqcoul}
with arbitrarily small volume.

In this paper we consider the modified energy
\begin{equation}\label{eqF}
F(E):= P_s(E)+
\iint_{E\times E} 
\frac{dx\,dy}{|x-y|^\alpha}, 
\end{equation}
with $s\in (0,1)$ and $\alpha\in(0,n)$,
which is a nonlocal extension of the energy in \eqref{eqcoul}, which takes
into account long-range interactions in the attractive term
of surface tension type.

Here 
\[
P_s(E) :=\iint_{E\times E^c}\frac{dx\,dy}{|x-y|^{n+s}}
\]
denotes the so-called fractional perimeter of $E$, recently introduced by Caffarelli, Roquejoffre and Savin in \cite{CRS}, where they initiated the analysis of the corresponding Plateau Problem.

The functional \eqref{eqF} has been studied in \cite{I5}, where the authors showed that 
volume-constrained minimizers are balls, if the volume is small enough,
thus extending the results in \cite{MK1,MK2}.

We observe that, for any~$\lambda>0$, it holds
$$ F(\lambda E) = \lambda^{n-s}
\left( P_s(E) + \lambda^{n-\alpha+s}
\iint_{E\times E} 
\frac{dx\,dy}{|x-y|^\alpha}\right).$$
Therefore, minimizing~$F$ under the volume
constraint~$|E|=m$ is equivalent to
minimize the functional
$$ F_\eps(E) := P_s(E) +\eps
\iint_{E\times E}
\frac{dx\,dy}{|x-y|^\alpha},$$
under the constraint $|E|=1$,
with~$\eps:=m^{1-\frac \alpha n+\frac s n}$.

If~$E$ is a critical point of~$F_\eps$
with boundary of class~$C^2$,
then
\begin{equation}\label{EL} 
\kappa_E (x)+ c \eps V_E(x) = \lambda_\eps 
\qquad{\mbox{ for any }}x\in\partial E,\end{equation}
for a suitable~$\lambda_\eps\in\R$. Here above, $c>0$ is a normalizing constant,
\begin{equation}\label{VE} V_E (x):=\int_E
\frac{dy}{|x-y|^\alpha}
\end{equation}
is the potential, and~$\kappa_E$ is the fractional mean curvature of~$E$, i.e.,
\begin{equation}\label{K}
\kappa_E(x):=\int_{\R^n} \frac{\chi_{\R^n\setminus E}(y)-\chi_{E}(y)}{|x-y|^{n+s}}\,dy,
\end{equation}
where the integral is meant in the principal value sense,
see e.g.~\cite{CRS, ABA, I5} for further details on this notion of mean curvature. 

Notice that, if $E$ is a critical point of~$F_\eps$
with boundary of class~$C^2$, by elliptic regularity (see \cite{Barrios})
the boundary of $E$ is in fact of class $C^\infty$.

The main result of this paper is that {\em critical points of~$F$
are necessarily spherically symmetric, if their volume is small enough with respect to their 
isodiametric ratio}, which is defined as
\begin{equation}\label{iotas} 
I(E) := \frac{m^{\frac1n}}{{\rm diam}(E)}.
\end{equation}

To state this result precisely, we introduce a scale parameter
that links volume and diameter of a set, given by
\begin{equation}\label{betas} 
\beta=\beta(n,\alpha,s):=\frac{n+s-\alpha}{(2n+s+1)n}.\end{equation}

With this notation, we have:

\begin{theorem}\label{MAIN}
Let~$n\ge 2$, $s\in(0,1)$ and~$\alpha\in(0,n-1)$. 

There exists~$c_o=c_o(n,\alpha,s)>0$ such that, if
$E_\star$ is a bounded, volume-constrained, critical point of~$F$ with smooth boundary, such that 
\begin{equation}\label{HY}
|E_\star|^\beta \le c_o\; I(E_\star),
\end{equation}
then~$E_\star$ is a ball.

Equivalently, if
$E$ is a bounded, volume-constrained, critical point of~$F_\epsilon$ with smooth boundary, such that 
\begin{equation}\label{HYE}
|E|=1 \qquad and \qquad 
 {\rm diam}(E)\le  c_o\; \epsilon^{-\frac{1}{2n+s+1}},
\end{equation}
then~$E$ is a ball.
\end{theorem}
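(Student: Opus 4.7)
First I would check the equivalence of the two formulations and then work throughout with the normalized one. The scaling
\[
F(\lambda E)=\lambda^{n-s}\Bigl(P_s(E)+\lambda^{n+s-\alpha}\iint_{E\times E}|x-y|^{-\alpha}\,dx\,dy\Bigr)
\]
recalled in the excerpt, together with the choice $\epsilon=m^{(n+s-\alpha)/n}$, converts the isodiametric hypothesis (\ref{HY}) into the diameter bound (\ref{HYE}) via the identity $n\beta/(n+s-\alpha)=1/(2n+s+1)$. In the normalized setting, a rearrangement comparison with a unit-volume ball centered at $x$ yields $V_E(x)\le C(n,\alpha)$ for every $x\in\R^n$, and inserting this pointwise bound into the Euler-Lagrange equation (\ref{EL}) gives $\|\kappa_E-\lambda_\epsilon\|_{L^\infty(\partial E)}\le C\epsilon$. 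Thus $E$ has almost constant fractional mean curvature, with an explicit smallness parameter proportional to $\epsilon$.

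The heart of the argument should be a rigidity theorem (not merely a stability statement) for sets of almost constant fractional mean curvature, naturally implemented via a nonlocal moving plane method. For each direction $e\in S^{n-1}$, slide the hyperplane $H_\lambda:=\{x\cdot e=\lambda\}$ from $+\infty$ toward $E$ and let $\lambda_*$ be the critical position at which the reflection $R$ across $H_{\lambda_*}$ first fails to carry the cap $E\cap\{x\cdot e>\lambda_*\}$ inside $E$. At a touching point $x^*\in\partial E$, the Euler-Lagrange equation (\ref{EL}) applied at both $x^*$ and $R(x^*)$, combined with the isometry invariance $\kappa_{R(E)}(x^*)=\kappa_E(R(x^*))$, gives
\[
\kappa_E(x^*)-\kappa_{R(E)}(x^*)=c\,\epsilon\,\bigl(V_E(R(x^*))-V_E(x^*)\bigr).
\]
The nonlocal strong comparison principle provides a strictly positive lower bound for the left-hand side in terms of the diameter and volume of $E$, while the right-hand side is bounded above by $C\epsilon\,\operatorname{diam}(E)\,\|\nabla V_E\|_\infty$; the assumption $\alpha<n-1$ ensures the gradient bound $\|\nabla V_E\|_\infty\le C(n,\alpha)$, by splitting the defining integral of $\nabla V_E$ into a near-singular piece (integrable since $\alpha+1<n$) and a regular tail. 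The condition (\ref{HYE}) with $c_o$ small then makes the perturbation strictly dominated by the lower bound on the curvature gap, and the moving plane cannot stop until $E$ has become symmetric with respect to the direction $e$; since $e$ is arbitrary, $E$ is a ball.

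The main obstacle is to realize the schematic ``curvature gap versus potential perturbation'' comparison in a scale-correct way, producing the precise exponent $1/(2n+s+1)$. One has to track how the $(n+s)$-order fractional curvature contrast between $E$ and its reflection $R(E)$ at the touching point $x^*$ scales with the diameter and volume of $E$, and to balance it against the $\alpha$-order Riesz-type potential contrast; the bookkeeping of the competing scalings of the fractional perimeter, the potential, and the Lebesgue measure is what forces the specific exponent and the precise form of $\beta$. A plausible alternative or complementary route is to first invoke a quantitative fractional Alexandrov theorem to get $\partial E$ close to a sphere in Hausdorff distance, and then to upgrade closeness to identity by a selection-principle argument exploiting the exact Euler-Lagrange equation.
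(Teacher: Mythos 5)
There is a genuine gap at the heart of your argument. The moving plane method you describe works for sets with \emph{exactly} constant fractional mean curvature: at the critical position one gets $\kappa_E(x^*)-\kappa_{R(E)}(x^*)=0$ from the Euler--Lagrange equation, while the nonlocal strong comparison principle forces this quantity to be strictly positive unless $E$ is symmetric about $H_{\lambda_*}$. For \emph{almost} constant curvature, however, your claim that the comparison principle ``provides a strictly positive lower bound for the left-hand side in terms of the diameter and volume of $E$'' is false: the curvature gap at the touching point is controlled from below only by some measure of the asymmetry of $E$ with respect to $H_{\lambda_*}$, and this degenerates to zero as $E$ becomes nearly symmetric. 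Hence the perturbation $C\epsilon\,\mathrm{diam}(E)\,\|\nabla V_E\|_\infty$ cannot be ``strictly dominated'' by a fixed lower bound, and the plane may well stop before full symmetry is reached. Making this comparison quantitative is precisely the hard technical content of the Ciraolo--Figalli--Maggi--Novaga theorem, which the paper invokes as a black box (their Theorem 1.5 and formula (1.4)) rather than reproves; and even that theorem only yields that $E$ is $C^2$-\emph{close} to a ball, with closeness of order $\eta_s(E)=(\mathrm{diam}\,E)^{2n+s+1}\delta_s(E)$, where $\delta_s(E)$ is the Lipschitz seminorm of $\kappa_E$ on $\partial E$ (note: the Lipschitz seminorm, not the $L^\infty$ oscillation you use, which is why the paper needs the gradient bound $|\nabla V_E|\le C$ and not just $V_E\le C$).

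The idea you are missing is the self-improvement that upgrades ``close to a ball'' to ``exactly a ball.'' Once $E=T(B)$ with $\|T-\mathrm{Id}\|_{C^1}\le C\eta_s(E)$, one shows (by comparing with the ball, where $V_B$ is constant on $\partial B$ by rotational symmetry) that the \emph{tangential} component of $\nabla V_E$ on $\partial E$ is of size $C\eta_s(E)$, not merely $O(1)$. Feeding this back into the Euler--Lagrange equation gives $\delta_s(E)\le C\epsilon\,\eta_s(E)=C\epsilon\,(\mathrm{diam}\,E)^{2n+s+1}\delta_s(E)$, and the hypothesis \eqref{HYE} with $c_o$ small makes the factor $C\epsilon\,(\mathrm{diam}\,E)^{2n+s+1}$ at most $1/2$, forcing $\delta_s(E)=0$ and hence, by the rigidity part of the same theorem, that $E$ is a ball. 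You gesture at this route in your last sentence as a ``plausible alternative,'' but without identifying the tangential gradient estimate and the resulting absorption argument, the proof does not close; this bootstrap, not the moving plane, is what produces the exponent $2n+s+1$ and the constant $\beta$ in the statement.
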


We observe that condition \eqref{HY}
does not violate the isodiametric inequality, since the right hand side is scale invariant, and so it is not void for small sets.
Moreover, we remark that
in problems like the ones considered in this paper, in which
an aggregating energy is competing with a repulsive one, universal bounds on
the diameter of the solutions do not hold in general. Indeed,
roughly speaking, the repulsive term of the energy functional may produce
critical points with disconnected components which lie ``far away''
one from the other
and which therefore violate uniform diameter bounds. To 
give a concrete example of this phenomenon we provide the following result:

\begin{theorem}\label{TH:EX}
Let~$n=1$, $s\in(0,1)$ and~$\alpha\in(0,1)$. 

There exists $\bar m=\bar m(\alpha,s)>0$ such that,
if~$m\in(0,\bar m)$,
there exists a bounded critical point $E_\star$ of $F$ with volume $m$,
made by two disconnected components, whose diameter satisfies
$$ {\rm diam}(E_\star) \ge C_o,$$
for some~$C_o>0$.

Equivalently, there exists~$\bar \epsilon=\bar \epsilon(\alpha,s)>0$ such that, if~$\epsilon\in(0,\bar \epsilon)$,
there exists a bounded set~$E\subset\R$
which is a solution of~\eqref{EL},
which is made by two disconnected components, whose diameter satisfies
$$ {\rm diam}(E) \ge C_o\,\epsilon^{-\frac{1}{1+s-\alpha}}.$$
In particular, the diameter of $E$ is not bounded uniformly in~$\varepsilon$.\end{theorem}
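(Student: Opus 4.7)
My plan is to construct the critical point explicitly via a symmetric two-interval ansatz and then apply an intermediate value argument. Working in the rescaled formulation ($|E|=1$ with functional $F_\epsilon$), I set $\ell := 1/2$ and, for $d>\ell$, consider
\[
E_d := \left[-\tfrac{d+\ell}{2},\,-\tfrac{d-\ell}{2}\right] \cup \left[\tfrac{d-\ell}{2},\,\tfrac{d+\ell}{2}\right],
\]
which automatically satisfies $|E_d|=1$. The EL equation \eqref{EL} must hold at the four endpoints, but by the reflection symmetry $x\mapsto -x$ of the configuration it reduces to the two equations at the inner endpoint $p_{\rm in}:=(d-\ell)/2$ and the outer endpoint $p_{\rm out}:=(d+\ell)/2$ of the right component. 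One of them fixes the Lagrange multiplier $\lambda_\epsilon$, so the task comes down to solving the single scalar equation obtained by subtracting the two.

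A direct computation using \eqref{VE} and \eqref{K} shows that when $\kappa_{E_d}$ and $V_{E_d}$ are evaluated at $p_{\rm in}$ and $p_{\rm out}$, the contributions from the interval containing those endpoints cancel in the differences by left/right symmetry of a single interval; what survives is the pairwise interaction with the other component, yielding
\[
\kappa_{E_d}(p_{\rm out})-\kappa_{E_d}(p_{\rm in}) = 2A_{1+s}(d),\qquad
V_{E_d}(p_{\rm in})-V_{E_d}(p_{\rm out}) = A_{\alpha}(d),
\]
where
\[
A_\gamma(d):=\int_{d-\ell}^{d} u^{-\gamma}\,du - \int_{d}^{d+\ell} u^{-\gamma}\,du > 0.
\]
The equation to be solved is thus $g(d):=2A_{1+s}(d)-c\,\epsilon\,A_\alpha(d)=0$ on $(\ell,+\infty)$. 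As $d\to\ell^+$ the integral $\int_{d-\ell}^{d} u^{-1-s}\,du$ diverges (since $1+s>1$), so $A_{1+s}(d)\to+\infty$; meanwhile, since $\alpha<1$, the integrand $u^{-\alpha}$ is integrable at $0$ and $A_\alpha(d)$ remains bounded, hence $g(d)\to+\infty$. For $d\to\infty$, a second-order Taylor expansion of $u\mapsto u^{-\gamma}$ gives the sharp asymptotics $A_\gamma(d)\sim \gamma\,\ell^2\,d^{-1-\gamma}$, so
\[
g(d)\sim 2(1+s)\ell^2\,d^{-2-s}-c\,\epsilon\,\alpha\,\ell^2\,d^{-1-\alpha};
\]
since $1+s-\alpha>0$, the second term dominates for $d$ large, giving $g(d)<0$. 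The intermediate value theorem then produces a root $d_\epsilon\in(\ell,+\infty)$, which by construction makes $E_{d_\epsilon}$ a critical point of $F_\epsilon$ with two disconnected components. Balancing the two terms in the asymptotic equation forces $d_\epsilon\gtrsim \epsilon^{-1/(1+s-\alpha)}$ for $\epsilon$ small, whence ${\rm diam}(E_{d_\epsilon})\ge C_o\,\epsilon^{-1/(1+s-\alpha)}$; the unrescaled statement follows from the relation $\epsilon=m^{1+s-\alpha}$ and the scaling ${\rm diam}(mE)=m\,{\rm diam}(E)$.

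The hard part is quantitative rather than conceptual: one must promote the limiting behaviour at the two ends of $(\ell,+\infty)$ into the uniform lower bound $d_\epsilon\gtrsim\epsilon^{-1/(1+s-\alpha)}$ on the root produced by the IVT. This requires matching upper and lower bounds on $A_{1+s}(d)$ and $A_\alpha(d)$ throughout the intermediate range of $d$, controlling the Taylor remainder at infinity and using monotonicity of the integrand to pin down the divergence near $d=\ell^+$; both are routine integral estimates but must be carried out explicitly to guarantee that the root lies in the claimed regime rather than at a much smaller scale.
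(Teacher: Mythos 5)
Your construction is essentially the paper's: the same two-interval ansatz (up to a translation), the same reduction by reflection symmetry to the single scalar equation $2A_{1+s}(d)=c\,\epsilon\,A_\alpha(d)$, and the same large-$d$ Taylor asymptotics identifying the scale $d\sim\epsilon^{-1/(1+s-\alpha)}$. The only point where you diverge is the final quantitative step, and there you make it harder than it needs to be: you do not need matching bounds on $A_{1+s}$ and $A_\alpha$ ``throughout the intermediate range.'' The paper simply applies the intermediate value theorem on $[d_\epsilon,+\infty)$ with $d_\epsilon:=\big(\tfrac{1+s}{c\alpha\epsilon}\big)^{1/(1+s-\alpha)}$: after factoring out $d^{-1-\alpha}$, the resulting function is $\le -c\alpha\epsilon/8<0$ at $d=d_\epsilon$ (the Taylor remainders being $O(\epsilon^{1+\frac{1}{1+s-\alpha}})$ there) and tends to $c\alpha\epsilon/4>0$ as $d\to+\infty$, so a root is produced directly in $(d_\epsilon,+\infty)$, which is exactly the claimed lower bound on the diameter; no information about $d\in(\ell,d_\epsilon)$ is needed.
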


One of the main tools in the proof of Theorem~\ref{MAIN} is a nonlocal version of 
Alexandrov's Theorem, recently proved in \cite{Ciraolo2}, which states 
that regular sets with almost constant fractional mean curvature are necessarily balls.
This result can be applied to solutions to \eqref{EL}, after deriving suitable estimates on the potential $V_E$.

We point out that such quantitative Alexandrov's Theorem does not hold in the local setting,
where it is known that a set with almost constant mean curvature
is close to a family of tangent balls (see \cite{Ciraolo}), and this is the main obstruction 
for extending our result to the case $s=1$, that is, to the functional in \eqref{eqcoul}.

\medskip

The plan of the paper is the following: in Section~\ref{secest}
we provide some preliminary estimates on the potential $V_E$ and on its gradient,
which will be useful in the proof of Theorem~\ref{MAIN};
in Section~\ref{secth1} we prove  Theorem~\ref{MAIN};
finally in Section~\ref{secth2} we prove  Theorem~\ref{TH:EX},
giving an explicit example of a one-dimensional set, composed by two disjoint segments,
which is a critical point of \eqref{eqF} and has  arbitrarily small volume.

\section{Potential estimates}\label{secest}

In this section we provide some bounds on the potential~$V$ and on its
derivatives that will be used in the proof of the main results.

\begin{lemma}\label{Lal}
Let~$B$ be a ball centered at the origin with~$|B|=|E|$. Then, for any~$x\in \R^n$,
$$ V_E(x)\le V_B(0) \le C,$$
for some~$C>0$.
\end{lemma}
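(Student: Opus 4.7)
The plan is to reduce the bound to a one-line rearrangement argument and then do an explicit computation on the ball.

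First I would fix $x \in \mathbb{R}^n$ and view $V_E(x)$ as the integral over $E$ of the function $f_x(y) := |x-y|^{-\alpha}$. The key observation is that $f_x$ is a radially symmetric, strictly decreasing function around $x$, so its superlevel sets $\{f_x > t\}$ are precisely the balls $B(x, t^{-1/\alpha})$ centered at $x$. By the bathtub principle (equivalently, by the layer-cake formula combined with $|E \cap \{f_x>t\}| \leq \min(|E|, |\{f_x>t\}|)$), the integral $\int_E f_x(y)\,dy$ is maximized, among all measurable sets of volume $|E|$, by the ball $B(x,R)$ centered at $x$ of the same volume as $E$, where $R$ is defined by $\omega_n R^n = |E|$. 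Hence
\[
V_E(x) \le \int_{B(x,R)} \frac{dy}{|x-y|^\alpha}.
\]

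Next I would use the translation invariance of Lebesgue measure to move the ball to the origin: the change of variables $y \mapsto y+x$ gives
\[
\int_{B(x,R)} \frac{dy}{|x-y|^\alpha} = \int_{B(0,R)} \frac{dy}{|y|^\alpha} = V_B(0),
\]
where $B = B(0,R)$ has the same volume as $E$. This establishes the first inequality $V_E(x) \le V_B(0)$ and, as a byproduct, identifies $V_B(0)$ as the supremum of $V_E$.

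For the second inequality, I would compute $V_B(0)$ directly in polar coordinates:
\[
V_B(0) = \int_0^R \int_{\partial B_r} \frac{d\mathcal{H}^{n-1}}{r^\alpha}\,dr = \omega_{n-1}\int_0^R r^{n-1-\alpha}\,dr = \frac{\omega_{n-1}}{n-\alpha}\, R^{n-\alpha}.
\]
This integral converges thanks to the standing assumption $\alpha < n$, and yields a bound depending only on $n$, $\alpha$, and $|E|$. Since the lemma will ultimately be applied after the rescaling that normalizes $|E|=1$, this value is a universal constant $C=C(n,\alpha)$.

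There is no real obstacle in this argument; the only point to keep track of is the integrability at the origin, which is exactly where the hypothesis $\alpha \in (0,n)$ is used. In particular, the sharper restriction $\alpha < n-1$ from Theorem \ref{MAIN} is not needed here, so the estimate will remain valid throughout the applications in the next section.
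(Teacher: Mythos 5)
Your proof is correct and follows essentially the same idea as the paper: both reduce the bound to the fact that, among sets of volume $|E|$, the potential $\int_E |x-y|^{-\alpha}\,dy$ is maximized by the ball of that volume centered at $x$, and then use integrability of the kernel ($\alpha<n$) to bound $V_B(0)$. The only cosmetic difference is that you invoke the bathtub/layer-cake principle where the paper proves the same comparison by hand, splitting $E$ and $B_\rho(x)$ into their common part and the two equal-measure symmetric differences.
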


\begin{proof} Let~$\rho>0$ be the radius of~$B$. In this way, we have that~$x+B=B_\rho(x)$.
Also, if~$y\in B_\rho(x)\setminus E$, then~$|x-y|\le\rho$ and so
$$ \int_{B_\rho(x)\setminus E} \frac{dy}{|x-y|^{\alpha}}\ge
\int_{B_\rho(x)\setminus E} \frac{dy}{\rho^{\alpha}} = 
\frac{ |B_\rho(x)\setminus E|}{\rho^{\alpha}}.$$
Similarly, if~$y\in E\setminus B_\rho(x)$, then~$|x-y|\ge\rho$ and so
$$ \int_{E\setminus B_\rho(x)} \frac{dy}{|x-y|^{\alpha}}\le
\frac{ |E\setminus B_\rho(x)|}{\rho^{\alpha}}.$$
Moreover
$$ |E\setminus B_\rho(x)| =
|E|-|E\cap B_\rho(x)| = |B_\rho(x)|-|B_\rho(x)\cap E|=
|B_\rho(x)\setminus E|.$$
Consequently,
$$ \int_{E\setminus B_\rho(x)} \frac{dy}{|x-y|^{\alpha}}\le
\int_{B_\rho(x)\setminus E} \frac{dy}{|x-y|^{\alpha}}.$$
Therefore, summing the contributions in~$E\cap B_\rho(x)$ to both sides
of this inequality, we obtain that
$$ \int_{E} \frac{dy}{|x-y|^{\alpha}}\le
\int_{B_\rho(x)} \frac{dy}{|x-y|^{\alpha}}.$$
This and the integrability of the kernel imply the desired result.
\end{proof}

Following are additional potential estimates:

\begin{lemma}\label{LP:P}
We have that
\begin{eqnarray}\label{LAK:Au1}
&& \int_E \nabla V_E(x)\cdot x\,dx
= -\frac{\alpha}{2}\int_E V_E(x)\,dx
\\ {\mbox{and }}&&\label{LAK:Au2}
\int_{\partial E} V_E(x)\,x\cdot\nu(x)\,d{\mathcal{H}}^{n-1}(x)
=\left( n-\frac\alpha2\right)
\,\int_E  V_E(x)\,dx
.\end{eqnarray}
Moreover,
\begin{equation}\label{123}
|\nabla V_E(x)|
\le C,
\end{equation}
for any~$x\in\R^n$, for a suitable~$C>0$.\end{lemma}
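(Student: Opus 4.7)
The plan is to establish the two integral identities \eqref{LAK:Au1}--\eqref{LAK:Au2} by a symmetrization trick and the divergence theorem, and then to obtain the pointwise gradient bound \eqref{123} by the same rearrangement-with-a-ball argument that was used in Lemma~\ref{Lal}.

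For \eqref{LAK:Au1}, I would begin by differentiating under the integral sign in the definition \eqref{VE}. Since the standing assumption $\alpha<n-1$ makes the kernel $|x-y|^{-(\alpha+1)}$ locally integrable, this produces the absolutely convergent expression
$$
\nabla V_E(x) = -\alpha\int_E \frac{x-y}{|x-y|^{\alpha+2}}\,dy.
$$
Dot-multiplying with $x$, integrating in $x$ over $E$, and exploiting the invariance of $E\times E$ under the swap $x\leftrightarrow y$, the resulting double integral can be symmetrized: averaging the integrand $x\cdot(x-y)/|x-y|^{\alpha+2}$ with its swapped version $-y\cdot(x-y)/|x-y|^{\alpha+2}$ collapses the numerator to $\tfrac12|x-y|^2$, which cancels two powers of the denominator. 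What is left is precisely $\tfrac12\int_E V_E(x)\,dx$, and this yields \eqref{LAK:Au1}.

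For \eqref{LAK:Au2}, I would apply the divergence theorem to the vector field $V_E(x)\,x$ on $E$. Using the product rule $\nabla\cdot(V_E(x)\,x)=\nabla V_E(x)\cdot x+n\,V_E(x)$ and substituting \eqref{LAK:Au1}, the volume integral rearranges to $(n-\alpha/2)\int_E V_E\,dx$, which matches the boundary flux on the left-hand side. The only technicality is checking that $V_E$ is sufficiently regular up to $\partial E$ for the divergence theorem to apply, but this follows from the smoothness of $\partial E$ together with the local integrability of the kernel and its gradient.

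For \eqref{123}, the same differentiation under the integral sign gives the pointwise majorization $|\nabla V_E(x)|\le\alpha\int_E |x-y|^{-(\alpha+1)}\,dy$. I would then repeat the argument of Lemma~\ref{Lal} with the exponent $\alpha$ replaced by $\alpha+1$: letting $B$ be the ball centered at $x$ of the same volume as $E$, the radial monotonicity of the kernel together with the identity $|E\setminus B|=|B\setminus E|$ gives $\int_E |x-y|^{-(\alpha+1)}\,dy\le\int_B |x-y|^{-(\alpha+1)}\,dy$; under the normalization $|E|=1$ the right-hand side is an explicit constant depending only on $n$ and $\alpha$, and the condition $\alpha+1<n$ ensures that it is finite. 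The main (and really only) obstacle is bookkeeping: justifying the differentiation under the integral and verifying that the rearrangement bound transfers to the differentiated kernel, both of which rely precisely on the sharp range $\alpha<n-1$ assumed in Theorem~\ref{MAIN}.
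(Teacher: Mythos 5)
Your proposal is correct and follows essentially the same route as the paper: the same symmetrization of the double integral over $E\times E$ for \eqref{LAK:Au1}, the same application of the divergence theorem to $V_E(x)\,x$ for \eqref{LAK:Au2}, and the same reduction of \eqref{123} to the rearrangement argument of Lemma~\ref{Lal} with exponent $\alpha+1\in(0,n)$. Your explicit remark that the gradient bound needs $\alpha<n-1$ matches the paper's parenthetical appeal to $\alpha+1\in(0,n)$.
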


\begin{proof} We observe that
\begin{equation}\label{AJ:101} \nabla V_E(x)=
-\alpha\int_E \frac{(x-y)\,dy}{|x-y|^{\alpha+2}},\end{equation}
so we can write
$$ \beta:=
\int_E \nabla V_E(x)\cdot x\,dx
= -\alpha \iint_{E\times E} 
\frac{(x-y)\cdot x}{|x-y|^{\alpha+2}}\,dx\,dy.$$
Since the role played by the variable in the latter
integral is symmetric, we can also write
$$ \beta=
-\alpha \iint_{E\times E} 
\frac{(y-x)\cdot y}{|x-y|^{\alpha+2}}\,dx\,dy.$$
As a consequence
\begin{eqnarray*}
&& 2\beta=-\alpha \iint_{E\times E} 
\frac{(x-y)\cdot x}{|x-y|^{\alpha+2}}\,dx\,dy
-\alpha \iint_{E\times E} 
\frac{(y-x)\cdot y}{|x-y|^{\alpha+2}}\,dx\,dy\\ &&\qquad\qquad=
-\alpha \iint_{E\times E} 
\frac{(x-y)\cdot (x-y)}{|x-y|^{\alpha+2}}\,dx\,dy
=-\alpha \iint_{E\times E} 
\frac{dx\,dy}{|x-y|^{\alpha}}.\end{eqnarray*}
This establishes~\eqref{LAK:Au1}.

Now we use the Divergence Theorem and~\eqref{LAK:Au1} to see that
\begin{eqnarray*}
&& \int_{\partial E} V_E(x)\,x\cdot\nu(x)\,d{\mathcal{H}}^{n-1}(x)
= \int_E {\rm div}\,\big( V_E(x)\,x\big)\,dx
\\ &&\qquad\qquad=
n \int_E  V_E(x)\,dx+
\int_E \nabla V_E(x)\cdot x\,dx
\\ &&\qquad\qquad=\left( n-\frac\alpha2\right)
\,\int_E  V_E(x)\,dx,\end{eqnarray*}
and so we have proved~\eqref{LAK:Au2}.

Finally, by~\eqref{AJ:101},
$$ |\nabla V_E(x)|\le 
\alpha\int_E \frac{dy}{|x-y|^{\alpha+1}},$$
hence~\eqref{123} follows as in Lemma~\ref{Lal} (replacing~$\alpha$ there
with~$\alpha+1\in(0,n)$).
\end{proof}

With this, we can show that smooth critical points
possess a uniform bound on~$\lambda_\eps$
in the Euler-Lagrange equation in~\eqref{EL}:

\begin{corollary}
There exist~$C_1$, $C_2>0$ such that
$$ \big|\lambda_\eps - C_1\,P_s(E)\big|\le C_2\eps.$$
\end{corollary}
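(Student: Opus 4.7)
The plan is to test the Euler--Lagrange equation~\eqref{EL} against the vector field $x\cdot\nu(x)$ on $\partial E$ and to recover $\lambda_\eps$ by recognising the three resulting boundary integrals. Multiplying~\eqref{EL} by $x\cdot\nu(x)$ and integrating over $\partial E$ gives
\begin{equation*}
\int_{\partial E}\kappa_E(x)\,x\cdot\nu(x)\,d{\mathcal{H}}^{n-1}(x)\;+\;c\eps\int_{\partial E}V_E(x)\,x\cdot\nu(x)\,d{\mathcal{H}}^{n-1}(x)\;=\;\lambda_\eps\int_{\partial E}x\cdot\nu(x)\,d{\mathcal{H}}^{n-1}(x).
\end{equation*}

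By the divergence theorem, the right-hand side equals $n\lambda_\eps|E|=n\lambda_\eps$, since critical points of $F_\eps$ satisfy $|E|=1$. By identity~\eqref{LAK:Au2} in Lemma~\ref{LP:P}, the second integral on the left equals $c\eps(n-\alpha/2)\int_E V_E\,dx$. For the first integral on the left, I would invoke a Pohozaev-type identity of the form
\begin{equation*}
\int_{\partial E}\kappa_E(x)\,x\cdot\nu(x)\,d{\mathcal{H}}^{n-1}(x)=c_*(n-s)\,P_s(E),
\end{equation*}
obtained by differentiating the scaling law $P_s((1+t)E)=(1+t)^{n-s}P_s(E)$ at $t=0$ and recalling that $\kappa_E$ is, up to a dimensional constant $c_*$, the first variation of $P_s$ along normal deformations. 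Alternatively one can compute $\frac{d}{dt}P_s((1+t)E)$ directly from the definition by the change of variables $x\mapsto(1+t)x$, $y\mapsto(1+t)y$, and symmetrize in $(x,y)$ in the spirit of the derivation of~\eqref{LAK:Au1}.

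Combining the three ingredients yields
\begin{equation*}
\lambda_\eps=\frac{c_*(n-s)}{n}\,P_s(E)+\frac{c(n-\alpha/2)}{n}\,\eps\int_E V_E(x)\,dx,
\end{equation*}
so the conclusion will follow with $C_1:=c_*(n-s)/n$ and an appropriate $C_2$ once $\int_E V_E\,dx$ is bounded. But Lemma~\ref{Lal} provides the uniform bound $V_E\le C$, and since $|E|=1$ we conclude $\int_E V_E\,dx\le C$ with a constant depending only on $n$, $\alpha$, $s$.

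The main technical point is to make the Pohozaev identity for $\kappa_E$ rigorous, including identification of the constant $c_*$: one must interchange differentiation at $t=0$ with the principal-value integral defining $\kappa_E$ in~\eqref{K}. On a smooth bounded set this is standard, via a cut-off at the singularity together with the symmetry $x\leftrightarrow y$, but the bookkeeping of constants requires attention. Once this identity is in place, the remainder of the argument is a routine assembly of Lemma~\ref{Lal} and Lemma~\ref{LP:P} together with the divergence theorem.
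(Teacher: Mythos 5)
Your proposal is correct and follows essentially the same route as the paper: both test the Euler--Lagrange equation against $x\cdot\nu$, identify $\int_{\partial E}\kappa_E\,x\cdot\nu\,d\mathcal{H}^{n-1}$ with $(n-s)P_s(E)$ (up to a normalizing constant) by differentiating the scaling law $P_s((1+t)E)=(1+t)^{n-s}P_s(E)$ via the first variation formula, use the divergence theorem and identity~\eqref{LAK:Au2} for the remaining terms, and conclude with the uniform bound on $V_E$ from Lemma~\ref{Lal}. The only difference is cosmetic: you track the normalization constants that the paper explicitly chooses to neglect.
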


\begin{proof}
Given a vector field~$ X \in C^\infty_0(\R^n)$,
we denote, for any small~$t>0$,
by~$\Phi^t$ the flow
induced by~$ X$, as defined by the Cauchy problem
$$ \begin{cases}
\partial_t \Phi^t(x)= X(\Phi^t(x)),\\
\Phi^0(x)=x.
\end{cases}$$
Then (up to normalizing constants that we neglect,
see e.g. Formula~(6.12) in~\cite{I5}), we have that
\begin{equation} \label{JA:coam}
\frac{\partial}{\partial t} P_s(\Phi^t(E))\Big|_{t=0}
=\int_{\partial E} \kappa_E(x)\,X(x)\cdot\nu(x)\,d{\mathcal{H}}^{n-1}(x),\end{equation}
being~$\nu$ the external normal of~$E$.

Now, by taking~$X(x):=x$ in 
the vicinity of~$E$, 
we have that
$$\Phi^t(x)=x+tX(x)+O(t^2)=(1+t)x+O(t^2)$$
and so~$|\Phi^t(E)|=(1+t+O(t^2))\,|E|$.
Therefore, by scaling, we have that
$$P_s(\Phi^t(E))=(1+t+O(t^2))^{n-s}\,P_s(E)$$
and thus
$$ \frac{\partial}{\partial t} P_s(\Phi^t(E))\Big|_{t=0}=
(n-s)\,P_s(E).$$
Comparing this and~\eqref{JA:coam}, we obtain
\begin{equation}\label{8ihS} 
P_s(E)= \int_{\partial E} \kappa_E(x)\,x\cdot\nu(x)\,d{\mathcal{H}}^{n-1}(x),\end{equation}
up to multiplicative normalization constants that we neglect.

Also, by the Divergence Theorem,
$$ \int_{\partial E} x\cdot\nu(x)\,d{\mathcal{H}}^{n-1}(x)
= \int_E {\rm div}\,x\,dx= n\,|E|.$$
Hence, by~\eqref{EL}, \eqref{8ihS} and Lemma~\ref{LP:P},
up to multiplicative constants,
\begin{equation}\begin{split}\label{uyhjHA}
\lambda_\eps \,|E|
\;&=\;\int_{\partial E} \lambda_\eps x\cdot\nu(x)\,d{\mathcal{H}}^{n-1}(x)
\\&=\;\int_{\partial E} \kappa_E(x)\,x\cdot\nu(x)\,d{\mathcal{H}}^{n-1}(x)
+\eps\int_{\partial E} V_E(x)\,x\cdot\nu(x)\,d{\mathcal{H}}^{n-1}(x)\\
&=\; P_s(E)+\eps\int_E V_E(x)\,dx.
\end{split}\end{equation}
Also, by Lemma~\ref{Lal},
$$ \int_E V_E(x)\,dx \le C\,|E|.$$
{F}rom this, \eqref{uyhjHA} and the volume constraint~$|E|=1$,
the desired result plainly follows.
\end{proof}

Now we control the tangential gradient of~$V_E$:

\begin{lemma}\label{HMA}
Let~$x\in\partial E$ and~$\tau_{E}(x)$ be a tangential vector to~$\partial E$ at~$x$.
Assume that~$E=T(B)$,
for a suitable ball~$B$, with~$|B|=|E|$ and let~$T$ be a diffeomorphism such that
$$ \| T-{\rm Id}\|_{C^1(B)}\le \mu,$$
for some~$\mu\ge0$.
Then, there exists~$\mu_0>0$ such that for any~$\mu\in[0,\mu_0]$
\begin{equation}\label{787y} \big|\nabla V_E(x)\cdot\tau_E(x)\big|
\le C\mu,\end{equation}
for some~$C>0$.
\end{lemma}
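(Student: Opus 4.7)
The strategy is to compare $\nabla V_E(x)\cdot\tau_E(x)$ with the corresponding quantity at the point $y:=T^{-1}(x)\in\partial B$, which vanishes identically by radial symmetry. Indeed, $V_B$ is a radial function, so $\nabla V_B(y)$ is parallel to $y$, and hence $\nabla V_B(y)\cdot\tau_0=0$ for every tangent vector $\tau_0\in T_y(\partial B)$. Note that this uses the standing assumption $\alpha<n-1$ of Theorem~\ref{MAIN}: by~\eqref{AJ:101}, $\nabla V_B$ is pointwise defined and continuous only when $|\cdot|^{-(\alpha+1)}$ is locally integrable, i.e., $\alpha+1<n$.

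Next, since $T$ sends $\partial B$ diffeomorphically onto $\partial E$, the tangent space splits as $T_x(\partial E)=DT(y)(T_y(\partial B))$, so there exists $\tau_0\in T_y(\partial B)$ with $\tau_E(x)=DT(y)\tau_0$ and $|\tau_0|\le (1+C\mu)|\tau_E(x)|$. Changing variables $z=T(w)$ in~\eqref{AJ:101} yields
\[
\nabla V_E(x)\cdot\tau_E(x) = -\alpha\int_B \frac{(T(y)-T(w))\cdot DT(y)\tau_0}{|T(y)-T(w)|^{\alpha+2}}\,|\det DT(w)|\,dw.
\]
From $\|T-\mathrm{Id}\|_{C^1(B)}\le\mu$ one then extracts the first-order expansions $T(y)-T(w)=(y-w)+O(\mu|y-w|)$, $DT(y)\tau_0=\tau_0+O(\mu|\tau_0|)$, $|T(y)-T(w)|^{-\alpha-2}=|y-w|^{-\alpha-2}(1+O(\mu))$ and $|\det DT(w)|=1+O(\mu)$. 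Multiplying out and collecting the principal term and the remainders, the identity becomes
\[
\nabla V_E(x)\cdot\tau_E(x) = -\alpha\int_B \frac{(y-w)\cdot\tau_0}{|y-w|^{\alpha+2}}\,dw \;+\; O(\mu|\tau_0|)\int_B \frac{dw}{|y-w|^{\alpha+1}}.
\]
By~\eqref{AJ:101} applied to $B$, the first integral equals $-\alpha^{-1}\nabla V_B(y)\cdot\tau_0$, which is zero by the radiality observation. The remainder integral is bounded by a universal constant by the same symmetrization argument as in Lemma~\ref{Lal}, applied with $\alpha+1$ (still strictly less than $n$) in place of $\alpha$. Combining, this gives~\eqref{787y}.

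The main obstacle is careful bookkeeping so that the cancellation of the principal term is honest: every piece of the linearization must be absolutely integrable, which is precisely where the standing hypothesis $\alpha<n-1$ enters. The smallness of $\mu_0$ is used to keep $T$ a diffeomorphism with $|\det DT|$ bounded below, and to ensure $|T(y)-T(w)|\ge \tfrac{1}{2}|y-w|$ uniformly, so that all singular kernels can be safely replaced by their linearizations up to multiplicative factors $1+O(\mu)$.
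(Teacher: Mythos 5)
Your proof is correct and follows essentially the same route as the paper: pull back to the ball via $T$, use the radial symmetry of $V_B$ to kill the principal term, and bound the linearization errors by $C\mu\int_B|y-w|^{-(\alpha+1)}\,dw$, which is finite precisely because $\alpha+1<n$. The paper organizes the remainder as a telescoping sum of four differences rather than a single $O(\mu)$ bookkeeping step, but the estimates and the key cancellation are identical.
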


\begin{proof} We define~$\bar x:=T^{-1}(x)$
and
$$ \tau_B(\bar x):=DT^{-1}(x)\,\tau_E(x)=
\big(DT(\bar x)\big)^{-1}\,\tau_E(x)
.$$ Notice
that~$\tau_B(\bar x)$ is a tangent vector to~$\partial B$ at~$\bar x\in\partial B$.
Also, by rotational symmetry, the function~$V_B$ is constant along~$\partial B$,
and therefore
\begin{equation}\label{oiKALL:1} \nabla V_B(\bar x)\cdot \tau_B(\bar x)=0.\end{equation}
Moreover, by~\eqref{AJ:101} and using the substitution~$\bar y:=T^{-1}(y)$,
\begin{equation}\label{oiKALL:2}\begin{split}
\nabla V_E(x)\cdot \tau_E(x) \,&=
-\alpha\int_E \frac{(x-y)\cdot \tau_E(x)}{|x-y|^{\alpha+2}} \,dy
\\ &=
-\alpha\int_B \frac{(T(\bar x)-T(\bar y))\cdot \big( DT(\bar x)
\tau_B(\bar x)\big)}{|T(\bar x)-T(\bar y)|^{\alpha+2}} 
\,|\det DT(\bar y)|
\,d\bar y.\end{split}
\end{equation}
Now, by~\eqref{787y},
\begin{equation}\label{89aA}\begin{split}
& \Big| DT(\bar x)\tau_B(\bar x) - \tau_B(\bar x) \Big|\le C\mu,\\
& \Big| \det DT(\bar y) -1\Big|\le C\mu,\\
{\mbox{and }}\quad&
\Big| T(\bar x)-T(\bar y) -(\bar x-\bar y) \Big|
\le
\int_0^1 \big|DT(t\bar y+(1-t)\bar x)\big|\,dt\,|x-y|
\le C\mu\,|\bar x-\bar y|
\end{split}\end{equation}
for some~$C>0$, and so
$$ 
(1-C\mu)\,|\bar x-\bar y|\le \big| T(\bar x)-T(\bar y) \big|
\le (1+C\mu)\,|\bar x-\bar y|,$$
which in turn implies that
\begin{eqnarray*}
&& \left| \frac{1}{|T(\bar x)-T(\bar y)|^{\alpha+2}} -
\frac{1}{|\bar x-\bar y|^{\alpha+2}} \right|
=\frac{1}{|\bar x-\bar y|^{\alpha+2}} \,
\left| 1 -
\frac{|\bar x-\bar y|^{\alpha+2}}{{|T(\bar x)-T(\bar y)|^{\alpha+2}}} \right|\le
\frac{C\mu}{|\bar x-\bar y|^{\alpha+2}}
,\end{eqnarray*}
up to renaming~$C>0$.
Using this and~\eqref{89aA} inside~\eqref{oiKALL:2}, we obtain that
\begin{eqnarray*}
&& \frac{1}{\alpha}\,\Big| \nabla V_E(x)\cdot \tau_E(x) - \nabla V_B(\bar x)\cdot \tau_B(\bar x)\Big|\\
&=&\left|
\int_B \frac{(T(\bar x)-T(\bar y))\cdot \big( DT(\bar x)
\tau_B(\bar x)\big)}{|T(\bar x)-T(\bar y)|^{\alpha+2}} 
\,|\det DT(\bar y)|
\,d\bar y
-\int_B \frac{(\bar x-\bar y)\cdot
\tau_B(\bar x)}{|\bar x-\bar y|^{\alpha+2}} 
\,d\bar y \right| \\
&\le& 
\left|
\int_B \frac{(T(\bar x)-T(\bar y))\cdot \big( DT(\bar x)
\tau_B(\bar x)\big)}{|T(\bar x)-T(\bar y)|^{\alpha+2}} 
\,|\det DT(\bar y)|
\,d\bar y
\right.\\ &&\quad \left. 
- \int_B \frac{(\bar x-\bar y)\cdot \big( DT(\bar x)
\tau_B(\bar x)\big)}{|T(\bar x)-T(\bar y)|^{\alpha+2}} 
\,|\det DT(\bar y)|
\,d\bar y\right|\\ &&\quad
+\left| \int_B \frac{(\bar x-\bar y)\cdot \big( DT(\bar x)
\tau_B(\bar x)\big)}{|T(\bar x)-T(\bar y)|^{\alpha+2}} 
\,|\det DT(\bar y)|
\,d\bar y
\right.\\ &&\quad \left. 
-
\int_B \frac{(\bar x-\bar y)\cdot 
\tau_B(\bar x)}{|T(\bar x)-T(\bar y)|^{\alpha+2}} 
\,|\det DT(\bar y)|
\,d\bar y\right|
\\ &&\quad+
\left| 
\int_B \frac{(\bar x-\bar y)\cdot 
\tau_B(\bar x)}{|T(\bar x)-T(\bar y)|^{\alpha+2}} 
\,|\det DT(\bar y)|
\,d\bar y-
\int_B \frac{(\bar x-\bar y)\cdot 
\tau_B(\bar x)}{|\bar x-\bar y|^{\alpha+2}} 
\,|\det DT(\bar y)|
\,d\bar y\right|\\ &&\quad+
\left|
\int_B \frac{(\bar x-\bar y)\cdot 
\tau_B(\bar x)}{|\bar x-\bar y|^{\alpha+2}} 
\,|\det DT(\bar y)|
\,d\bar y
-\int_B \frac{(\bar x-\bar y)\cdot
\tau_B(\bar x)}{|\bar x-\bar y|^{\alpha+2}} 
\,d\bar y \right|
\\ &\le& C\mu
\int_B \frac{d\bar y}{|\bar x-\bar y|^{\alpha+1}} 
\\ &\le&C\mu,
\end{eqnarray*}
possibly changing~$C>0$ from one line to another.
{F}rom this estimate and~\eqref{oiKALL:1},
the desired result plainly follows.
\end{proof}

\section{Proof of Theorem~\ref{MAIN}}\label{secth1}

By a spatial dilation, we replace $E_\star$ with a critical point $E$ of the functional~$F_\eps$,
under the volume constraint~$|E|=1$. To this aim, we define
$$ E:= m^{-\frac1n}\,E_\star \quad{\mbox{ and }}\quad \eps:=m^{1-\frac \alpha n+\frac s n}.$$
In particular ${\rm diam}(E)=m^{-\frac1n}\,{\rm diam}(E_\star)$ and thus,
by \eqref{iotas} and \eqref{betas}, condition \eqref{HY} becomes
\begin{equation}\label{epsis}
\begin{split}&
{\eps}\,{\big( {\rm diam}(E)\big)^{2n+s+1}}=\left( {\eps^{\frac{1}{2n+s+1}}\cdot m^{-\frac1n}}{
\,{\rm diam}(E_\star)}\right)^{2n+s+1} \\ 
&\qquad\quad= \left( { m^{\frac{n-\alpha+s}{(2n+s+1)n}}\cdot m^{-\frac1n} }{
\,{\rm diam}(E_\star)}\right)^{2n+s+1} = \left( \frac{m^\beta}{ I(E_\star)}\right)^{2n+s+1} \le c_o^{2n+s+1},\end{split}\end{equation}
which is supposed to be a small quantity.

Also, the Euler-Lagrange equation in~\eqref{EL}
holds true. 
Therefore we have that
\begin{equation}\label{deltas}
\begin{split}
\delta_s(E)\,&:=\sup_{{x,y\in\partial E}\atop{x\ne y}}
\frac{|\kappa_E(x)-\kappa_E(y)|}{|x-y|} \\
&=\sup_{{x,y\in\partial E}\atop{x\ne y}}
\frac{\Big| \big(c\,\eps\,V_E(x)-\lambda_\eps\big)-
\big(c\,\eps\,V_E(y)-\lambda_\eps\big)
\Big|}{|x-y|} \\
&=c\eps\,\sup_{{x,y\in\partial E}\atop{x\ne y}}
\frac{|V_E(x)-V_E(y)|}{|x-y|} .
\end{split}
\end{equation}
Thus, by \eqref{123},
\begin{equation}\label{deltas2}
\delta_s(E)\le c\eps\,\sup_{{x,y\in\R^n}\atop{x\ne y}}
\frac{|V_E(x)-V_E(y)|}{|x-y|} \le C\eps,
\end{equation}
for some $C>0$.
Now, letting 
\begin{equation}\label{tfygHAJ}
\eta_s(E) := \big({\rm diam}(E)\big)^{2n+s+1}\,\delta_s(E),
\end{equation}
by Formula~(1.4) in~\cite{Ciraolo2} we have that
\begin{equation}\label{321}
\rho(E):=
\inf\left\{ \frac{R-r}{{\rm diam}(E)}, {\mbox{ with }}p\in E,\; B_r(p)\subseteq E\subseteq B_R(p)\right\}
\le C\,\eta_s(E)
,\end{equation}
up to renaming~$C>0$. Also, by \eqref{deltas2}
and \eqref{tfygHAJ} we get
\begin{equation}\label{etastim}
\eta_s(E) \le C\,\big({\rm diam}(E)\big)^{2n+s+1}\,\eps.
\end{equation}
This and \eqref{epsis} give that $\eta_s(E)$ is small if so is $c_o$, hence we are in the position to apply Theorem 1.5
in \cite{Ciraolo2}. In particular, we obtain that $\partial E=F(\partial B)$, where $B$ is a ball with volume $m$, with
$$ \| F-{\rm Id}\|_{C^2(\partial B)}\le C\,\eta_s(E),$$
up to renaming $C>0$. That is, if we set $T(x):=|x|\,F\left(\frac{x}{|x|}\right)$, we have that $E=T(B)$
and
$$ \| T-{\rm Id}\|_{C^1(B)}\le C\,\eta_s(E).$$
This says that we are in the setting of Lemma~\ref{HMA}, with $\mu:= C\,\eta_s(E)$.
As a consequence, we obtain that
\begin{equation*}
\big|\nabla_\tau V_E(x)\big|
\le C\eta_s(E),\end{equation*}
for some~$C>0$. Hence, we insert this into \eqref{deltas} and we find
$$ \delta_s(E)\le C\,\eps\,\eta_s(E),$$
up to renaming $C>0$. Accordingly, in view of \eqref{tfygHAJ} and \eqref{epsis},
$$ \delta_s(E)\le C\,\eps\,\big(  {\rm diam}(E)\big)^{2n+s+1}\delta_s(E)\le C\,c_o^{2n+s+1}\,\delta_s(E)\le \frac{
\delta_s(E)}{2},$$
if $c_o$ is small enough. This implies that $\delta_s(E)=0$.

So, by \eqref{tfygHAJ}, we have that $\eta_s(E)=0$ and then, by \eqref{321}, also $\rho(E)=0$, which says that $E$ is a ball, as desired.

\section{Proof of Theorem~\ref{TH:EX}}\label{secth2}

We let
$$ E:=\left( 0,\,\frac12\right) \cup \left( d,\,d+\frac12\right),$$
for some~$d>1/2$, that we choose appropriately in order to
fulfill~\eqref{EL}.

By~\eqref{K} and~\eqref{VE}, for any~$x\in \{0,\;1/2,\; d,\; d+1/2\}$,
\begin{eqnarray*}
\zeta_E(x)&:=& \kappa_E(x)+c\varepsilon \,V_E(x)\\
&=& \int_{-\infty}^0 
\frac{dy}{|x-y|^{1+s}}
- \int_{0}^{1/2} \frac{dy}{|x-y|^{1+s}}
+ \int_{1/2}^d\frac{dy}{|x-y|^{1+s}}
\\&&\qquad- \int_{d}^{d+1/2} 
\frac{dy}{|x-y|^{1+s}}+\int_{d+1/2}^{+\infty}
\frac{dy}{|x-y|^{1+s}} \\
&&\qquad + c\varepsilon \,\int_{0}^{1/2} \frac{dy}{|x-y|^{\alpha}}
+ c\varepsilon \,\int_{d}^{d+1/2} \frac{dy}{|x-y|^{\alpha}}.\end{eqnarray*}
By symmetry (i.e. using the substitution~$\tilde y:= (d+1/2)-y$),
we see that
\begin{equation}\label{ZETA.1}
\zeta_E(0)=\zeta_E\left( d+\frac12\right)\quad{\mbox{ and }}\quad
\zeta_E\left(\frac12\right)=\zeta_E(d).
\end{equation}
Now we prove that we can choose~$d>1/2$ in such a way that
\begin{equation}\label{ZETA.2}
\zeta_E(0)=\zeta_E\left(\frac12\right).
\end{equation}
To this aim, we compute
\begin{eqnarray*}
f(d)&:=& \zeta_E\left(\frac12\right)-\zeta_E(0)\\
&=& \int_{-\infty}^0
\frac{dy}{|y-1/2|^{1+s}}
- \int_{0}^{1/2} \frac{dy}{|y-1/2|^{1+s}}
+ \int_{1/2}^d\frac{dy}{|y-1/2|^{1+s}}
\\&&\qquad- \int_{d}^{d+1/2}
\frac{dy}{|y-1/2|^{1+s}} 
+\int_{d+1/2}^{+\infty}
\frac{dy}{|y-1/2|^{1+s}}
\\
&&\qquad -\int_{-\infty}^0
\frac{dy}{|y|^{1+s}}
+\int_{0}^{1/2} \frac{dy}{|y|^{1+s}}
-\int_{1/2}^d\frac{dy}{|y|^{1+s}}\\&&\qquad+
\int_{d}^{d+1/2}\frac{dy}{|y|^{1+s}} -
\int_{d+1/2}^{+\infty}
\frac{dy}{|y|^{1+s}}
\\
&&\qquad + c\varepsilon \,\int_{0}^{1/2} \frac{dy}{|y-1/2|^{\alpha}}
+ c\varepsilon \,\int_{d}^{d+1/2} \frac{dy}{|y-1/2|^{\alpha}}
\\&&\qquad -c\varepsilon \,\int_{0}^{1/2} \frac{dy}{|y|^{\alpha}}
-c\varepsilon \,\int_{d}^{d+1/2} \frac{dy}{|y|^{\alpha}}\\
&=& \int_{-\infty}^{-1/2}
\frac{dy}{|y|^{1+s}}
- \int_{-1/2}^{0} \frac{dy}{|y|^{1+s}}
+ \int_{0}^{d-1/2}\frac{dy}{|y|^{1+s}}
- \int_{d-1/2}^{d}
\frac{dy}{|y|^{1+s}} 
+\int_{d}^{+\infty}
\frac{dy}{|y|^{1+s}}\\
&&\qquad -\int_{-\infty}^0
\frac{dy}{|y|^{1+s}}
+\int_{0}^{1/2} \frac{dy}{|y|^{1+s}}
-\int_{1/2}^d\frac{dy}{|y|^{1+s}}+
\int_{d}^{d+1/2}\frac{dy}{|y|^{1+s}} 
-\int_{d+1/2}^{+\infty}
\frac{dy}{|y|^{1+s}}
\\
&&\qquad + c\varepsilon \,\int_{-1/2}^{0} \frac{dy}{|y|^{\alpha}}
+ c\varepsilon \,\int_{d-1/2}^{d} \frac{dy}{|y|^{\alpha}}
-c\varepsilon \,\int_{0}^{1/2} \frac{dy}{|y|^{\alpha}}
-c\varepsilon \,\int_{d}^{d+1/2} \frac{dy}{|y|^{\alpha}}\\
&=& -2\int_{d-1/2}^d\frac{dy}{|y|^{1+s}}
+2\int_{d}^{d+1/2}\frac{dy}{|y|^{1+s}} 
+ c\varepsilon \,\int_{d-1/2}^{d} \frac{dy}{|y|^{\alpha}}
-c\varepsilon \,\int_{d}^{d+1/2} \frac{dy}{|y|^{\alpha}}
\\ &=&
\frac{2}{s}\left[ 2d^{-s}-(d-1/2)^{-s}-(d+1/2)^{-s}\right]
\\ &&\qquad
+ \frac{c\varepsilon}{1-\alpha}
\left[ 2d^{1-\alpha}-(d-1/2)^{1-\alpha}- (d+1/2)^{1-\alpha}\right]
.\end{eqnarray*}
Now we use that, for any~$\beta\in\R$ and~$\delta>0$ small,
$$ 2-(1-\delta)^\beta-(1+\delta)^\beta =
-\beta\,(\beta-1)\,\delta^2 +O(\delta^3),$$
therefore, for large~$d$,
\begin{equation}\label{FG}\begin{split}
f(d)\;&=\,
\frac{2d^{-s}}{s}\left[ 2-\left(1-\frac{1}{2d}\right)^{-s}
-\left(1+\frac{1}{2d}\right)^{-s}\right]
\\ &\qquad
+ \frac{c\varepsilon\,d^{1-\alpha}}{1-\alpha}
\left[ 2- \left(1-\frac{1}{2d}\right)^{1-\alpha}-
\left(1+\frac{1}{2d}\right)^{1-\alpha}\right]
\\ &=\,
\frac{2d^{-s}}{s}\cdot\left(\frac{-s(1+s)}{4d^2}+O(d^{-3})\right)
+\frac{c\varepsilon\,d^{1-\alpha}}{1-\alpha}\cdot
\left(\frac{\alpha(1-\alpha)}{4d^2}+O(d^{-3})\right)
\\ &=\, d^{-1-\alpha}
\left( 
\frac{c\alpha\varepsilon}{4}
-\frac{(1+s)d^{-1-s+\alpha}}{2}
+O(d^{-2-s+\alpha})
+O(\varepsilon d^{-3})
\right)
\\ &=\, d^{-1-\alpha}\,g(d),
\end{split}\end{equation}
where
$$ g(d):= \frac{c\alpha\varepsilon}{4}
-\frac{(1+s)d^{-1-s+\alpha}}{2}
+O(d^{-2-s+\alpha})
+O(\varepsilon d^{-3}).$$
Notice that
\begin{equation}\label{G.1} 
g(+\infty)=\frac{c\alpha\varepsilon}{4} >0.\end{equation}
Also, if
$$ d_\varepsilon:= \left(\frac{1+s}{c\alpha\varepsilon}\right)^{\frac{1}{1+s-\alpha}},$$
we have that
$$ g(d_\varepsilon)= -\frac{c\alpha\varepsilon}{4}
+O(\varepsilon^{1+\frac{1}{1+s-\alpha}})+O(\varepsilon^{1+\frac{3}{1+s-\alpha}})
\le -\frac{c\alpha\varepsilon}{8} <0,$$
if~$\varepsilon$ is sufficiently small.

{F}rom this and~\eqref{G.1} we obtain that there exists
an appropriate~$d>d_\varepsilon$ for which~$g(d)=0$,
and thus, recalling~\eqref{FG},
we obtain that~$f(d)=0$, which proves~\eqref{ZETA.2}.

Now, from~\eqref{ZETA.1} and~\eqref{ZETA.2}, we deduce that
$$ \zeta_E(0)=\zeta_E\left( d+\frac12\right)=
\zeta_E\left(\frac12\right)=\zeta_E(d)=:\lambda,$$
for some~$\lambda\in\R$, that is
$$ \kappa_E(x)+c\varepsilon \,V_E(x)=\lambda$$
for any~$x\in\partial E$, as desired.

\section*{Acknowledgements}
Part of this work was carried out while Serena Dipierro
and Enrico Valdinoci were visiting the University of Pisa, which
they wish to thank for the hospitality.

This work has been supported by the Alexander von Humboldt Foundation, the
ERC grant 277749 {\it E.P.S.I.L.O.N.} ``Elliptic
Pde's and Symmetry of Interfaces and Layers for Odd Nonlinearities'',
the PRIN grant 201274FYK7
``Aspetti variazionali e
perturbativi nei problemi differenziali nonlineari''
and the University of Pisa grant PRA- 2015-0017.

Finally, the authors wish to thank Giulio Ciraolo  and Cyrill Muratov for 
useful conversations on the subject of this paper.

\end{document}